\newtheorem{theorem}[subsection]{Theorem}
\newtheorem{definition}[subsection]{Definition}
\newtheorem{lemma}[subsection]{Lemma}
\newtheorem{remark}[subsection]{Remark}
\newtheorem{proposition}[subsection]{Proposition}
\newtheorem{corollary}[subsection]{Corollary}
\newtheorem*{claim*}{Claim}
\newtheorem*{theorem*}{Theorem}
\def\bal{\begin{aligned}}
\def\eal{\end{aligned}}
\def\be{\begin{equation}}
\def\ee{\end{equation}}
\def\bcs{\begin{cases}}
\def\ecs{\end{cases}}
\def\={\;=\;}
\def\+{\,+\,}
\def\-{\,-\,}
\def\Z{{\mathbb Z}}
\def\N{{\mathbb N}}
\def\R{{\mathbb R}}
\def\cartier{\mathscr{C}_p}
\def\v#1{{\bf #1}}
\def\is{\equiv}
\def\mod#1{({\rm mod}\ #1)}
\def\ceil#1{\lceil #1\rceil}
\def\boldell{\bm{\ell}}
\definecolor{lightgrey}{rgb}{0.8, 0.84, 0.8}
\title{$p$-Linear schemes for sequences modulo $p^r$}
\author{Frits Beukers}
\begin{document}
\maketitle
\begin{abstract}
Many interesting combinatorial sequences, such as Ap\'ery numbers and Franel numbers,
enjoy the so-called Lucas property modulo almost all primes $p$.
Modulo prime powers $p^r$ such sequences have a more complicated
behaviour which can be described by matrix versions of the Lucas property called
$p$-linear schemes. They are examples of finite $p$-automata. In this paper we construct
such $p$-linear schemes and give upper bounds for the number of states which, for fixed
$r$, do not depend on $p$.
\end{abstract}

\section{Introduction}
In this paper we study infinite sequences of integers that have special properties
when they are reduced modulo a prime $p$ or a prime power $p^r$. The most familiar
sequences are those sequences $a_k$ that have the so-called Lucas property modulo 
almost all primes $p$,
$$
a_{k_mp^m+\cdots+k_1p+k_0}\is a_{k_m}\cdots a_{k_1}a_{k_0}\mod{p}
$$
for all $k_i$ with $0\le k_i\le p-1$. Examples are the exponential sequences like $2^k$,
the central binomial coefficients $\binom{2k}{k}$, the Ap\'ery and Franel-type numbers
given by
$$
\sum_{m=0}^k\binom{k}{m}^2\binom{k+m}{m}^2,\quad \sum_{m=0}^k\binom{k}{m}^\ell
$$
and countless others. Our first result concerns constant term sequences of the form
$$
g_k=\text{ct}[g(\v x)^k]\quad k\ge0,
$$
where $g(\v x)\in\Z[x_1^{\pm1}\cdots x_n^{\pm 1}]$ is a Laurent
polynomial in the $n$ variables $x_1,\ldots,x_n$ and 'ct' is the operation of
taking the constant term. In this paper we denote the Newton polytope of
$g(\v x)$ by $\Delta$. By $\Delta^\circ$ we denote the topological interior of $\Delta$, i.e
$\Delta$ minus its proper faces.
In \cite[Thm 1.4]{HeStr22} we find the following theorem.

\begin{theorem}\label{main}
Suppose that $\Delta^\circ$ has $\v 0$ as unique lattice point. 
Then, for any prime $p$ the constant term sequence $g_k$ has the Lucas property. 
\end{theorem}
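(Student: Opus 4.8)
The plan is to work modulo $p$ throughout and to combine the Frobenius congruence with the geometry of $\Delta$. Write $N=k_mp^m+\cdots+k_1p+k_0$ with $0\le k_i\le p-1$. Since the coefficients of $g$ are integers, Fermat's little theorem gives $a^p\is a\mod p$ for each coefficient, and the freshman's-dream identity in characteristic $p$ then yields $g(\v x)^{p^i}\is g(\v x^{p^i})\mod p$, where $g(\v x^{p^i})$ is obtained by substituting $x_j\mapsto x_j^{p^i}$. Factoring $g(\v x)^N=\prod_{i=0}^m\big(g(\v x)^{p^i}\big)^{k_i}$ and applying this congruence to each factor, I would obtain
\[
g(\v x)^N\is\prod_{i=0}^m g(\v x^{p^i})^{k_i}\mod p.
\]

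Next I would extract the constant term of the right-hand side. Writing $g(\v x)^{k_i}=\sum_\beta c^{(i)}_\beta\,\v x^\beta$, the factor $g(\v x^{p^i})^{k_i}$ equals $\sum_\beta c^{(i)}_\beta\,\v x^{p^i\beta}$, so the constant term of the product is
\[
\sum c^{(0)}_{\beta_0}c^{(1)}_{\beta_1}\cdots c^{(m)}_{\beta_m},
\]
summed over all tuples $(\beta_0,\ldots,\beta_m)$ with $\beta_i$ an exponent of $g(\v x)^{k_i}$ and $\sum_{i=0}^m p^i\beta_i=\v 0$. The crucial structural input is that every exponent of $g(\v x)^{k_i}$ lies in the dilate $k_i\Delta$, and since $\v 0\in\Delta$ and $k_i\le p-1$ we have $k_i\Delta\subseteq(p-1)\Delta$.

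The heart of the argument, and the step I expect to be the main obstacle, is to show that the only admissible tuple is $\beta_0=\cdots=\beta_m=\v 0$, a ``no carries'' phenomenon forced by the hypothesis on $\Delta^\circ$. From $\sum_i p^i\beta_i=\v 0$ I would read off $\beta_0=-p\gamma$ with $\gamma=\beta_1+p\beta_2+\cdots+p^{m-1}\beta_m$ a lattice vector, so $\beta_0/p=-\gamma$ is a lattice point. But $\beta_0\in(p-1)\Delta$ gives $\beta_0/p\in\frac{p-1}{p}\Delta$, and because $\v 0$ lies in the topological interior of the compact polytope $\Delta$, dilation by the factor $\frac{p-1}{p}<1$ places $\frac{p-1}{p}\Delta$ inside $\Delta^\circ$. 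Hence $\beta_0/p$ is a lattice point of $\Delta^\circ$, and by hypothesis $\beta_0/p=\v 0$, i.e. $\beta_0=\v 0$. The relation then reduces to $\sum_{i\ge1}p^{i-1}\beta_i=\v 0$, which has the same shape, and an induction on $m$ forces every $\beta_i=\v 0$ in turn.

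With all $\beta_i=\v 0$, the surviving term is $\prod_{i=0}^m c^{(i)}_{\v 0}$, and $c^{(i)}_{\v 0}$ is precisely the constant term $g_{k_i}$ of $g(\v x)^{k_i}$. Taking the constant term of both sides of the displayed congruence therefore yields $g_N\is g_{k_m}\cdots g_{k_1}g_{k_0}\mod p$, which is the Lucas property. The only genuinely nontrivial point is the geometric observation that a lattice point of the form $\tfrac1p\beta$ with $\beta\in(p-1)\Delta$ must be the origin; everything else is formal manipulation with the Frobenius and with Minkowski dilates of $\Delta$.
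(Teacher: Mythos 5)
Your proof is correct, and it takes a genuinely more elementary route than the paper's. The paper works with the generating function $1/(1-tg(\v x))$: it writes $\frac{1}{1-tg(\v x)}\is\frac{\sum_{k=0}^{p-1}t^kg(\v x)^k}{1-t^pg(\v x^p)}\mod{p}$, applies the Cartier operator $\cartier$ (which extracts the monomials $\v x^{p\v v}$), and concludes the one-step recursion $g_{pm+k}\is g_kg_m\mod{p}$, from which the Lucas property follows by iterating over base-$p$ digits. You bypass generating functions and the Cartier operator entirely: you expand $g(\v x)^N\is\prod_i g(\v x^{p^i})^{k_i}\mod{p}$ via Frobenius and establish the full multi-digit congruence in one pass through your ``no carries'' induction on the exponent tuples. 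The crucial ingredient is the same in both proofs, namely that a lattice point of the form $\v w/p$ with $\v w\in(p-1)\Delta$ lies in $\frac{p-1}{p}\Delta\subset\Delta^\circ$ and hence equals $\v 0$ by hypothesis (valid since $\v 0\in\Delta^\circ$, by the standard convexity property of relative interiors); your bookkeeping around it is sound, including the containment $k_i\Delta\subseteq(p-1)\Delta$ and the divisibility step $\beta_0=-p\gamma$. What the paper's formulation buys is reusability: the Cartier-operator setup is exactly what Proposition \ref{mainprop} and Theorems \ref{main2} and \ref{lucaslemma} generalize to interior lattice points other than $\v 0$ and to congruences modulo $p^r$, settings where your direct expansion would no longer close up into a product formula. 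What your version buys is self-containedness and the multi-digit statement directly, without needing to iterate a one-step recursion.
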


To my knowledge the first explicit occurrence of this theorem is in the case
$s=1$ of \cite[Thm 3.3]{SaStra09}, which is unfortunately unpublished.

In Section \ref{sec:constant-term} we present a proof,
which is basically the same as in \cite{HeStr22}. 
Here are some examples of interest. In \cite[Example 2.3]{HeStr22} we find

$$
\binom{2k}{k}=\text{ct}\left[\left(x+2+\frac{1}{x}\right)^k\right].
$$ 
In \cite[Example 2.4]{HeStr22} we find
$$
\sum_{k_1,\ldots,k_n\ge0\atop k_1+\cdots+k_n=k}\left(\frac{k!}{k_1!\cdots k_n!}\right)^2
=\text{ct}\left[(1+x_1+\cdots+x_n)^k\left(1+\frac{1}{x_1}+\cdots+\frac{1}{x_n}\right)^k\right].
$$
In \cite[Example 2.5]{HeStr22} we find
$$
\sum_{m=0}^k\binom{m+k}{m}^2\binom{k}{m}^2=
\text{ct}\left[\left(\frac{(x + y)(z + 1)(x + y + z)(y + z + 1)}{xyz}\right)^k\right].
$$
These are the famous Ap\'ery numbers that occur in the irrationality proof of $\zeta(3)$.
Finally we mention
$$
\sum_{m=0}^k\binom{k}{m}^{n+1}=\text{ct}\left[(1+x_1)^k(1+x_2)^k\cdots(1+x_n)^k
\left(1+\frac{1}{x_1\cdots x_n}\right)^k\right]
$$
which is not hard to show.

In general one cannot expect the Lucas property to hold modulo
$p^r$ for $r>1$, or when $\Delta^\circ$ contains another interior lattice point besides $\v 0$.
But one can introduce additional companion sequences, which result in matrix versions
of the Lucas property. 

\begin{definition}\label{def:linearscheme}
A $p$-linear scheme modulo $p^r$ for a set of sequences $\v a_k=(a_{1,k},\ldots,a_{s,k})^t$, $k=0,1,\ldots$
is a set of $s\times s$-matrices $M_\ell$ with entries in $\Z$ such that
\be\label{definition1}
\v a_{kp+\ell}\is M_\ell\v a_k\mod{p^r}
\ee
for every $\ell\in\{0,1,\ldots,p-1\}$ and $k\ge0$. 

We say that any constant linear combination of the sequences $a_{1,k},\ldots,a_{s,k}$ 
is a sequence that can be generated by a $p$-linear scheme modulo $p^r$. We call $s$
the number of states of this $p$-linear scheme. 
\end{definition}

As explained in \cite{Straub22}, $p$-linear schemes are special instances of finite $p$-automata.
A simple corollary of Definition \ref{def:linearscheme} is
$$
\v a_{k_mp^m+\cdots+k_1p+k_0}\is M_{k_0}M_{k_1}\cdots M_{k_{m}}\v a_0\mod{p^r},
$$
where $0\le k_i\le p-1$ for all $i=0,\ldots,m$. So all one needs in order to
compute $\v a_k\mod{p^r}$
for any $k$ are the matrices $M_\ell$ for $\ell=0,1,\ldots,p-1$, the vector 
$\v a_0$ and the base $p$ expansion of $k$. Furthermore, if $r=1$, $s=1$ and $a_0=1$
then Definition \ref{def:linearscheme} comes down to the Lucas property modulo $p$.

There is also a formulation of a $p$-linear scheme in terms of generating power series. 
Let $\v F(t)=\sum_{k\ge0}\v a_kt^k$, a vector of generating series,
and $M(t)=\sum_{\ell=0}^{p-1}M_\ell t^\ell$. Then \eqref{definition1}
is equivalent to 
\be\label{definition2}
\v F(t)\is M(t)\v F(t^p)\mod{p^r}.
\ee

As an example we display a generalization of Theorem \ref{main}.
Let $\Delta_\Z^\circ$ be the set of lattice points in $\Delta^\circ$. 
Define the $p$-th \emph{Hasse-Witt matrix} $H$ associated to $1-tg(\v x)$ as the square matrix
indexed by $\Delta_\Z^\circ$, whose $\v u,\v v$-entry is given by
$$
H_{\v u\v v}(t)=\text{coefficient of $\v x^{p\v u-\v v}$ in }(1-tg(\v x))^{p-1}. 
$$
Note that the entries of $H$ are polynomials of degree $\le p-1$. For any $\v u\in\Delta_\Z^\circ$ define
$$
F_{\v u}(t)=\sum_{k\ge0}ct[\v x^{\v u}g(\v x)^k]t^k.
$$
We prove in Section \ref{sec:constant-term} the following statement.

\begin{theorem}\label{main2}
For any $\v u\in\Delta_\Z^\circ$ we have
$$
F_{\v u}(t)=\sum_{\v v\in\Delta_\Z^\circ}H_{\v u\v v}(t)F_{\v v}(t^p)\mod{p}.
$$
\end{theorem}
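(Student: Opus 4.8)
The plan is to establish the stated identity coefficient-by-coefficient in powers of $t$, by extracting the constant term of $\v x^{\v u}g(\v x)^k$ and relating it to the Hasse-Witt recursion through Fermat's little theorem. The key observation is that modulo $p$, one has $g(\v x)^k$ decomposing according to the base-$p$ digits of $k$: writing $k = pj + \ell$ with $0 \le \ell \le p-1$, the factorization $g(\v x)^{pj+\ell} = g(\v x)^\ell \cdot (g(\v x)^j)^p$ holds, and modulo $p$ the Frobenius relation gives $(g(\v x)^j)^p \is g(\v x^p)^j \mod p$. This is the engine that converts a generating series in $t$ into one in $t^p$.

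First I would fix $\v u \in \Delta_\Z^\circ$ and compute the coefficient of $t^{pj+\ell}$ in $F_{\v u}(t)$, namely $\text{ct}[\v x^{\v u} g(\v x)^{pj+\ell}]$. Using the Frobenius congruence above, this is congruent modulo $p$ to $\text{ct}[\v x^{\v u} g(\v x)^\ell g(\v x^p)^j]$. The next step is to recognize that $g(\v x^p)^j$ is a Laurent polynomial in $\v x^p$, so the constant-term extraction splits: a monomial $\v x^{p\v w}$ arising from $g(\v x^p)^j$ contributes to the constant term of $\v x^{\v u} g(\v x)^\ell g(\v x^p)^j$ exactly when it pairs with the monomial $\v x^{-\v u - p\v w}$ coming from $g(\v x)^\ell$. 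Comparing this with the definition of $H_{\v u \v v}(t) = \sum_\ell (\text{coeff of }\v x^{p\v u - \v v}\text{ in }(1-tg(\v x))^{p-1})\, t^\ell$ and with $F_{\v v}(t^p) = \sum_j \text{ct}[\v x^{\v v} g(\v x)^j]\, t^{pj}$, I would match the two sides as formal power series in $t$.

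The step I expect to be the main obstacle is reconciling the power $\ell$ of $g(\v x)$ appearing in the Frobenius split with the power $p-1$ appearing in the definition of the Hasse-Witt matrix via $(1-tg(\v x))^{p-1}$. The resolution should come from expanding $(1-tg(\v x))^{p-1}$ and using the congruence $\binom{p-1}{\ell} \is (-1)^\ell \mod p$, so that the coefficient of $t^\ell$ in $(1-tg(\v x))^{p-1}$ is $\binom{p-1}{\ell}(-1)^\ell g(\v x)^\ell \is g(\v x)^\ell \mod p$. This identifies the entries $H_{\v u \v v}(t)$ precisely with the constant-term contributions needed, and care is required to ensure that only $\v v \in \Delta_\Z^\circ$ actually contribute: the support condition that $p\v u - \v v$ lie in the support of $g^\ell$ (and hence inside $p\Delta$) forces $\v v$ to remain in the relevant lattice region, which is where the polytope geometry enters.

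Once these identifications are in place, I would assemble the matching coefficient of $t^{pj+\ell}$ on the right-hand side $\sum_{\v v} H_{\v u \v v}(t) F_{\v v}(t^p)$: the factor $t^{pj+\ell}$ arises uniquely as $t^\ell$ from $H_{\v u\v v}$ times $t^{pj}$ from $F_{\v v}(t^p)$, and its coefficient is $\sum_{\v v} (\text{coeff of }\v x^{p\v u - \v v}\text{ in }g(\v x)^\ell)\, \text{ct}[\v x^{\v v} g(\v x)^j]$, which is exactly the constant-term expansion of $\text{ct}[\v x^{\v u} g(\v x)^\ell g(\v x^p)^j]$ computed above. Since the two sides agree in every $t$-coefficient modulo $p$, the series identity follows. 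The only genuinely delicate point is keeping the index set honest, namely verifying that the intermediate monomial exponents force $\v v \in \Delta_\Z^\circ$ so that the finite indexing by interior lattice points is self-consistent.
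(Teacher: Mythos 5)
Your proposal is sound in substance, and it is best described as an elementary unpacking of the paper's own mechanism rather than a new one. The paper proves Theorem \ref{main2} in three lines by specializing its Cartier-operator machinery: equation \eqref{cartier} of Proposition \ref{mainprop} with $r=\rho=1$ and $A(\v x)=\v x^{\v u}$ gives $\cartier\bigl(\v x^{\v u}/(1-tg(\v x))\bigr)\is \bigl(1-t^pg(\v x)\bigr)^{-1}\cartier\bigl(\v x^{\v u}(1-tg(\v x))^{p-1}\bigr)\mod{p}$, after which one takes constant terms. Your digit-splitting $k=pj+\ell$ together with $g(\v x)^{pj}\is g(\v x^p)^j\mod{p}$ and $\binom{p-1}{\ell}\is(-1)^\ell\mod{p}$ (equivalently, $\sum_{\ell=0}^{p-1}t^\ell g(\v x)^\ell\is(1-tg(\v x))^{p-1}\mod{p}$) is exactly the coefficient-level translation of that identity; in fact it is the argument the paper gives in its warm-up proof of Theorem \ref{main}, generalized from $\v u=\v 0$ to arbitrary $\v u\in\Delta_\Z^\circ$. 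What your route buys is self-containedness: no Cartier operator, no Proposition \ref{mainprop}. What the paper's route buys is uniformity: the same computation run with $\rho,r>1$ produces the mod $p^r$ schemes of Theorem \ref{lucaslemma}, whereas your Frobenius congruence $g^p\is g(\v x^p)$ acquires error terms for $r>1$ that the $p$-adic expansion in Proposition \ref{mainprop} is designed to organize. Also note that your ``delicate point'' (only $\v v\in\Delta_\Z^\circ$ contribute) is precisely Lemma \ref{muproperty}: a nonzero contribution forces $p\v v\in\v u+\ell\Delta\subset(\ell+1)\Delta^\circ\subset p\Delta^\circ$, hence $\v v\in\Delta^\circ$; you gestured at this correctly, but it does require the lemma (and the implicit fact that $\v 0\in\Delta$).

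One bookkeeping item needs fixing, in your write-up and, as it happens, in the paper's as well. Carried out exactly, your pairing argument gives as the coefficient of $t^{pj+\ell}$ in $F_{\v u}(t)$ the sum over $\v v$ of $\bigl(\text{coefficient of }\v x^{p\v v-\v u}\text{ in }g(\v x)^\ell\bigr)\cdot\text{ct}[\v x^{\v v}g(\v x)^j]$: the exponent equation is $\v u+\v a+p\v w=\v 0$ with $\v w=-\v v$, so $\v a=p\v v-\v u$, not $p\v u-\v v$ as you assert in your final assembly. With the paper's stated definition of $H_{\v u\v v}$ (coefficient of $\v x^{p\v u-\v v}$), what you actually prove is therefore the transposed identity $F_{\v u}(t)\is\sum_{\v v}H_{\v v\v u}(t)F_{\v v}(t^p)\mod{p}$. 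This is not a flaw in your method: the paper's own proof has the identical discrepancy, since $\cartier\bigl(\v x^{\v u}(1-tg)^{p-1}\bigr)$ equals $\sum_{\v v}\bigl(\text{coefficient of }\v x^{p\v v-\v u}\text{ in }(1-tg)^{p-1}\bigr)\v x^{\v v}$, which is $\sum_{\v v}H_{\v v\v u}(t)\v x^{\v v}$ under the stated definition, not $\sum_{\v v}H_{\v u\v v}(t)\v x^{\v v}$ as claimed there. The cure is either to redefine $H_{\v u\v v}(t)$ as the coefficient of $\v x^{p\v v-\v u}$ in $(1-tg(\v x))^{p-1}$ or to state the theorem with the transposed matrix; a careful version of your proof should do one or the other explicitly rather than assert the match.
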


A simple example for congruences modulo higher powers $p^r$ is the 2-state 
$p$-linear scheme modulo $p^2$ for the sequences $a=2^k,b_k=k2^k$.
It follows from the following congruences, which are easily verified for any odd prime $p$,
\begin{eqnarray*}
a_{kp+\ell}&\is& 2^\ell a_k+p\alpha 2^\ell b_k\mod{p^2}\\
b_{kp+\ell}&\is& \ell 2^\ell a_k+p2^\ell(1+\ell\alpha)b_k\mod{p^2},
\end{eqnarray*}
where $\alpha\is \frac{2^{p-1}-1}{p}\mod{p}$. In general $2^k\mod{p^r}$ can be generated by
an $r$-state $p$-linear scheme. 

An example of historical interest is Ira Gessel's mod $p^2$ congruence for the Ap\'ery numbers
$A_k=\sum_{m=0}^k\binom{k}{m}^2\binom{m+k}{m}^2$ in \cite{Ge82}.

\begin{theorem}[Gessel, 1982]
Define for all integers $k\ge0$,
$$A'_k=\sum_{m=0}^k\binom{k}{m}^2\binom{m+k}{m}^2
\left(\frac{1}{m+1}+\cdots+\frac{1}{k}\right).$$
Then, for any prime $p$ and any integers $k\ge0$ and $\ell=0,1,\ldots,p-1$ we have
$$
A_{kp+\ell}\is A_\ell A_k+pA'_\ell kA_k\mod{p^2}.
$$
\end{theorem}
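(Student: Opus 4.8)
The plan is to work throughout with the constant term representation $A_k=\text{ct}[g(\v x)^k]$ for the Apéry Laurent polynomial $g(\v x)=\frac{(x+y)(z+1)(x+y+z)(y+z+1)}{xyz}$, whose Newton polytope $\Delta$ has $\v 0$ as its unique interior lattice point. By Theorem \ref{main} the Lucas congruence $A_{kp+\ell}\equiv A_\ell A_k\pmod p$ is already available, so the real task is to identify the order-$p$ correction. I would begin by fixing a Dwork splitting $g(\v x)^p=g(\v x^p)+p\,h(\v x)$ with $h\in\Z[x^{\pm1},y^{\pm1},z^{\pm1}]$ and $\supp(h)\subseteq p\Delta$; such an $h$ exists because $g(\v x)^p\equiv g(\v x^p)\pmod p$ coefficientwise.

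Next I would expand $g^{kp+\ell}=g^\ell(g^p)^k=g^\ell\bigl(g(\v x^p)+p\,h\bigr)^k\equiv g^\ell g(\v x^p)^k+kp\,g^\ell g(\v x^p)^{k-1}h\pmod{p^2}$, since every higher binomial term carries a factor $p^2$. Taking constant terms, the main term is in fact exact. Writing $[\v x^{\v w}]P$ for the coefficient of $\v x^{\v w}$ in a Laurent polynomial $P$, one has $[\v x^{-p\v u}]g(\v x^p)^k=[\v x^{-\v u}]g^k$, so $\text{ct}[g^\ell g(\v x^p)^k]=\sum_{\v u}\bigl([\v x^{p\v u}]g^\ell\bigr)\bigl([\v x^{-\v u}]g^k\bigr)$; but $[\v x^{p\v u}]g^\ell\neq0$ forces $p\v u\in\ell\Delta$, hence $\v u\in\tfrac{\ell}{p}\Delta\subseteq\Delta^\circ$, and since $\ell\le p-1$ this yields $\v u=\v 0$. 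The surviving term is $A_\ell A_k$, exactly the leading term of the theorem, so all of its content is concentrated in the correction.

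It therefore remains to prove the mod-$p$ identity $\text{ct}[g^\ell g(\v x^p)^{k-1}h]\equiv A'_\ell A_k\pmod p$, which after multiplication by $kp$ produces the term $pA'_\ell kA_k$. This is the main obstacle, and here the cheap support argument fails: $g^\ell h$ is supported on $(\ell+p)\Delta$, strictly larger than $\Delta$, so the $\ell$- and $k$-indices genuinely couple, and this coupling is precisely the arithmetic substance of Gessel's congruence. The crucial mechanism to pin down is that, modulo $p$, the Frobenius defect $h=\tfrac1p\bigl(g(\v x)^p-g(\v x^p)\bigr)$ carries the harmonic weights $\frac1{m+1}+\cdots+\frac1k$ defining $A'$; heuristically these digamma-type weights are exactly what appears on differentiating the Pochhammer factors $\binom{k}{m}$ and $\binom{m+k}{m}$ of the Apéry sum, the operation that turns $A$ into $A'$.

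To make this last step rigorous I would pass from the constant term back to the binomial double sum $A_{kp+\ell}=\sum_{m}\binom{kp+\ell}{m}^2\binom{m+kp+\ell}{m}^2$, write $m=jp+i$ with $0\le i\le p-1$, and expand each of the four binomial factors by the mod-$p^2$ sharpening of Lucas' theorem, whose first-order term is a difference of harmonic numbers. Collecting the resulting $O(p)$ contributions, the $(i,j)$-sum should factor as an $\ell$-dependent harmonic sum times the $k$-dependent Apéry sum, delivering $A'_\ell A_k$; the delicate point is the bookkeeping that reassembles the harmonic corrections into $H_k-H_m=\frac1{m+1}+\cdots+\frac1k$ and that leaves the factor $A_k$ (rather than $A_{k-1}$) surviving modulo $p$. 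The finitely many small primes can then be checked directly.
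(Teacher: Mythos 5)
Your reduction is correct as far as it goes, but first a point of comparison: the paper itself offers \emph{no} proof of this statement. Gessel's congruence is quoted there purely as an example of historical interest, with a citation to \cite{Ge82}; the paper's own machinery (Theorem \ref{lucaslemma} with $r=2$) only guarantees the existence of \emph{some} $p$-linear scheme mod $p^2$ for the Ap\'ery numbers, not the specific closed form with the harmonic-sum coefficients $A'_\ell$. As for your argument, the opening steps are sound: the splitting $g(\v x)^p=g(\v x^p)+p\,h(\v x)$ with $\supp(h)\subset p\Delta$, the expansion $g^{kp+\ell}\is g^\ell g(\v x^p)^k+kp\,g^\ell g(\v x^p)^{k-1}h \mod{p^2}$, and the support argument giving $\mathrm{ct}[g^\ell g(\v x^p)^k]=A_\ell A_k$ exactly (using only that $\v 0$ is interior to $\Delta$ and $\ell<p$, so $\tfrac{\ell}{p}\Delta\subset\Delta^\circ$) are all correct.

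The difficulty is that the proof stops exactly where the theorem begins. The entire arithmetic content of Gessel's result is the claimed identity $\mathrm{ct}[g^\ell g(\v x^p)^{k-1}h]\is A'_\ell A_k \mod{p}$, and for this you offer only a heuristic (``the Frobenius defect carries the harmonic weights'') followed by a change of strategy: abandoning the constant-term framework and redoing everything from the binomial double sum via a mod-$p^2$ sharpening of Lucas' theorem. That second route is essentially Gessel's original proof, but you only describe it (``should factor'', ``the delicate point is the bookkeeping'') rather than execute it, and the bookkeeping is genuinely nontrivial: one must handle the coupling you yourself identified (the support of $g^\ell h$ is $(\ell+p)\Delta\supsetneq\Delta$, so nonzero lattice points do contribute and the $\ell$- and $k$-parts do not trivially decouple), explain why the surviving factor is $A_k$ rather than $A_{k-1}$ even though the correction term involves $g(\v x^p)^{k-1}$, and reassemble four families of harmonic corrections into the single weight $\frac{1}{m+1}+\cdots+\frac{1}{k}$. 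None of this is carried out. What you have is a clean and correct reduction of Gessel's congruence to its essential core, plus a plan for that core; it is not yet a proof.
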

Multiply this result with $kp+\ell$ to get
$$
(kp+\ell)A_{kp+\ell}\is \ell A_\ell A_k+p(A_\ell+\ell A'_\ell)kA_k\mod{p^2}.
$$
So the sequences $A_k,kA_k$ form the two states of a $p$-linear scheme modulo $p^2$.

Suppose again that $g(\v x)$ is any Laurent polynomial in $x_1,\ldots,x_n$ and coefficients 
in $\Z$. The following statement is an immediate consequence of Theorem \ref{lucaslemma}
and Remark \ref{states-bound}.

\begin{corollary}\label{main3}
Let $r\in\N$ and $p$ a prime. Choose an integer $\rho$ such that $\rho-\ceil{\rho/p}\ge r-1$. 
Then the numbers $g_k\mod{p^r}$ can be generated by a $p$-linear scheme with $s$ states, where
$s$ is bounded by $\rho$ times the number of lattice points in $\rho\Delta^\circ$.
\end{corollary}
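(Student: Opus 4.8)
The plan is to realise $g_k \bmod p^r$ as one coordinate of a vector of \emph{companion sequences} which, after reduction modulo $p^r$, satisfy a one-step recursion of the shape \eqref{definition1} with integer matrices. For a lattice point $\v v$ and an integer $c\ge 0$ write $a_{\v v,k}=\text{ct}[\v x^{\v v}g(\v x)^k]$, and consider the modulated, shifted sequences $\binom{k}{c}a_{\v v,k-c}$. The target $g_k=a_{\v 0,k}$ is the special case $\v v=\v 0$, $c=0$, so it suffices to exhibit a finite family of such companions, indexed by $\v v\in\rho\Delta^\circ\cap\Z^n$ and $0\le c<\rho$, that is closed under $k\mapsto kp+\ell$ modulo $p^r$; this family has exactly $\rho\cdot\#(\rho\Delta^\circ\cap\Z^n)$ members, giving the stated bound on the number of states.

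The engine is a Frobenius/Dwork congruence. Since $g$ has integer coefficients, $g(\v x)^p\equiv g(\v x^p)\pmod p$, so I would write $g(\v x)^p=g(\v x^p)+p\,h(\v x)$ with $h\in\Z[x_1^{\pm1},\dots,x_n^{\pm1}]$, substitute this into $g^{kp}=(g^p)^k$, expand by the binomial theorem, and discard the terms divisible by $p^r$:
\be
g(\v x)^{kp+\ell}\equiv\sum_{j=0}^{r-1}\binom{k}{j}p^{j}\,g(\v x)^\ell h(\v x)^{j}\,g(\v x^p)^{k-j}\pmod{p^r}.
\ee
Now apply $\text{ct}[\v x^{\v u}\,\cdot\,]$. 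Using the Cartier operator $\psi$ (with $\psi(\v x^{\v a})=\v x^{\v a/p}$ if $p\mid\v a$ and $0$ otherwise), together with $\text{ct}\circ\psi=\text{ct}$ and $\psi\big(g(\v x^p)^{m}F(\v x)\big)=g(\v x)^{m}\psi(F)$, each summand is rewritten so that the power of $g(\v x^p)$ becomes a power of $g(\v x)$:
\be
a_{\v u,kp+\ell}\equiv\sum_{j=0}^{r-1}\binom{k}{j}p^{j}\,\text{ct}\big[g(\v x)^{k-j}\,\psi(\v x^{\v u}g^\ell h^{j})\big]\pmod{p^r}.
\ee
Expanding the Laurent polynomial $\psi(\v x^{\v u}g^\ell h^{j})=\sum_{\v v}\gamma_{\v v}\v x^{\v v}$ into monomials turns each inner constant term into $a_{\v v,k-j}=\text{ct}[\v x^{\v v}g^{k-j}]$, so the right-hand side becomes an \emph{integer} linear combination of the companions $\binom{k}{j}a_{\v v,k-j}$. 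The factor $\binom{k}{j}$ is what forces the modulation index $c$, the index $k-j$ is what forces the shift, and integrality of all coefficients (monomial coefficients, binomials, and the powers $p^j$) is what keeps the matrices integral.

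Two bookkeeping points remain. First, the monomial index $\v v$ must stay inside the fixed finite set $\rho\Delta^\circ\cap\Z^n$: since $\supp(h)\subseteq p\Delta$ and $\v u\in\rho\Delta^\circ$, the support of $\v x^{\v u}g^\ell h^{j}$ lies in $(\rho+\ell+jp)\Delta^\circ$, and $\psi$ contracts exponents by $1/p$; here openness of $\Delta^\circ$ is used to keep the contracted support strictly inside $\rho\Delta^\circ$, which again ties back to the dilation factor $\rho$ being large. Second — and this is where I expect the real effort — one must check that, starting from a companion $\binom{k}{c}a_{\v v,k-c}$ with $0\le c<\rho$, the index shifts and binomial re-expansions generated by the recursion do not escape the range $0\le c'<\rho$. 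Concretely, rewriting $a_{\v v,kp+\ell-c}$ in the canonical form $a_{\v v,(k-d)p+\ell''}$ borrows a digit contributing $d\le\ceil{\rho/p}$, and re-applying the congruence contributes up to $j\le r-1$, so the new modulation index satisfies $c'\le\ceil{\rho/p}+(r-1)$.

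The hypothesis $\rho-\ceil{\rho/p}\ge r-1$ is precisely the inequality guaranteeing that $c'$ remains admissible, so that the finite state set is genuinely closed modulo $p^r$; I regard verifying this closure (and the matching support estimate) as the main obstacle, since it requires tracking the interaction between the $1/p$-contraction, the binomial weights, and the base-$p$ carries uniformly in $p$. Granting it, the matrices $M_\ell$ are well-defined integer matrices of size $\rho\cdot\#(\rho\Delta^\circ\cap\Z^n)$, and reading off the $(\v 0,0)$ coordinate yields a $p$-linear scheme for $g_k\bmod p^r$ with the asserted bound. For $r=1$ the whole construction collapses to the Hasse--Witt statement of Theorem \ref{main2}, which is a useful consistency check on the argument.
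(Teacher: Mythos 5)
Your route is genuinely different from the paper's: you work at the level of sequences, with companions $\binom{k}{c}a_{\v v,k-c}$ and the Dwork-style splitting $g(\v x)^p=g(\v x^p)+ph(\v x)$, whereas the paper works with generating functions, taking as states the series $t^\ell F_{\v u}(t)$ where $F_{\v u}(t)=\mathrm{ct}\left[\v x^{\v u}/(1-tg(\v x))^{\rho}\right]$ (i.e.\ companions $\binom{\rho-1+k-\ell}{\rho-1}a_{\v u,k-\ell}$), and reads the scheme in the form \eqref{definition2} directly off the single Cartier congruence \eqref{cartier}. For $r=1$ your argument is complete and coincides with the proof of Theorem \ref{main2}. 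For $r\ge2$, however, both points you postpone as ``bookkeeping'' are genuine gaps. First, the support estimate fails under the stated hypothesis on $\rho$: the exponents $\v v$ surviving in $\psi(\v x^{\v u}g^\ell h^j)$ satisfy only $p\v v\in(\rho+\ell+jp)\Delta^\circ$, so you need $\rho+(p-1)+(r-1)p\le p\rho$, i.e.\ $\rho\ge(rp-1)/(p-1)$, while the hypothesis $\rho-\ceil{\rho/p}\ge r-1$ allows $\rho=r$ (for $r\le p$), which violates this whenever $r\ge 2$. Concretely, for $g=2x+3+1/x$, $p=5$, $r=\rho=2$, $\v u=1$, $\ell=4$, $j=1$: the coefficient of $x^{10}$ in $xg^4h$ equals $16\cdot 6=96\neq0$, so $\psi(xg^4h)$ contains $x^{2}$, and $2\notin 2\Delta^\circ=(-2,2)$; your recursion then calls $\mathrm{ct}[x^2g^{k-1}]$, a sequence outside the state set. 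The paper avoids exactly this by first rewriting $A/f^\rho=Af^{p\ceil{\rho/p}-\rho}/f^{p\ceil{\rho/p}}$, so that the exponent being contracted is a multiple of $p$; the Cartier image then has support in $(m+\ceil{\rho/p})\Delta^\circ$ with $m+\ceil{\rho/p}\le r-1+\ceil{\rho/p}\le\rho$, which is where the hypothesis on $\rho$ is actually used.

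The second gap is the fatal one: your family $\binom{k}{c}a_{\v v,k-c}$, with modulation index tied to the shift, is not closed modulo $p^r$ under $k\mapsto kp+\ell$, and the inequality $c'\le\ceil{\rho/p}+(r-1)$ you invoke controls only the shift, not the modulation. Indeed $\binom{kp+\ell}{c}=\sum_{i=0}^{c}E_i\binom{k}{i}$ with $E_i\in\Z$, $E_0=\binom{\ell}{c}$ and $p^i\mid E_i$; feeding your congruence into $\binom{kp+\ell}{c}a_{\v v,kp+\ell-c}$ produces terms $E_ip^{j}\gamma\binom{k}{i}\binom{k-d}{j}a_{\v w,k-d-j}$, and a state requires the binomial index to equal the shift $d+j$. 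But the integer expansion of $\binom{k}{i}\binom{k-d}{j}$ involves $\binom{k}{c'}$ with $c'\ne d+j$, and such mismatched terms carry coefficients of $p$-adic valuation only $i+j$, so they survive modulo $p^r$ whenever $i+j<r$. The simplest instance already breaks: for $c=1$, $\ell=1$, $\v v=\v 0$ one has $\binom{kp+1}{1}=1+p\binom{k}{1}$ and $d=0$, and the $i=1$, $j=0$ contribution is $p\binom{k}{1}a_{\v 0,k}=pkg_k$, which is nonzero modulo $p^2$ and is not of the form $\binom{k}{c'}a_{\v w,k-c'}$ for any $c'$. Repairing this forces states $\binom{k}{c'}a_{\v w,k-c''}$ with independent indices, which needs a new closure argument and loses the claimed bound of $\rho$ states per lattice point. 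The paper's companions sidestep all of this: since \eqref{lucas} is an identity of rational functions with the \emph{same} denominator exponent $\rho$ on both sides, extracting the coefficient of $t^{kp+\ell}$ reproduces the same modulation factor $\binom{\rho-1+m}{\rho-1}$ attached to each $a_{\v v,m}$, and no product of binomial coefficients ever has to be re-expanded.
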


Note that if $r=1$ we can take $\rho=1$. Since $g_0=1$ we recover Theorem \ref{main}
if $\v 0$ is the unique lattice point in $\Delta^\circ$.
Notice also that $\rho$ can always be chosen such that $\rho\le2r$. 

Our second result concerns coefficients of power series expansions for rational functions.
Let $P(\v x),Q(\v x)\in\Z[x_1,\ldots,x_n]$ and suppose that $P_0:=P(\v 0)\ne0$. Then we
can expand
$$
\frac{Q(\v x)}{P(\v x)}=\sum_{\v k\in\Z_{\ge0}^n}a_{\v k}\v x^{\v k}
$$
with $a_{\v k}\in\Z[1/P_0]$. Of particular interest in the literature are the diagonal
sequences $a_{k,k,\ldots,k}$. Some typical examples,
\begin{itemize}
\item[-] The diagonal elements of $(1-x_1-\cdots-x_n)^{-1}$ are $\frac{(kn)!}{(k!)^n}$.
\item[-] The diagonal elements of $((1-x_1-x_2)(1-x_3-x_4)-x_1x_2x_3x_4)^{-1}$ are
the Ap\'ery numbers introduced above. See \cite{Straub14}.
\end{itemize}
In \cite{We} we find a long list of further examples, with further references to
Sloane's Online Encyclopedia. 

In Theorem \ref{rationalcoefficients} we construct a $p$-linear scheme to compute the coefficients 
$a_{\v k}$. This time the coefficients $a_{\v k}$ have $\Z_{\ge0}^n$ as domain, rather than 
$\Z_{\ge0}$. However, it is straightforward to replace $\v a_k$ by
$\v a_{\v k}$ in Definition \ref{def:linearscheme}. Here is a consequence of
Corollary \ref{rationalcoefficients-corollary} which also follows from \cite[Theorem 5.2]{RoYa15}
with $s=1$.

\begin{corollary}
Let notations be as above and suppose that $Q(\v x)=1$, $P_0=1$ and the degree of $P(\v x)$ in
each variable is $1$. Then, for every prime $p$, 
$\v k\in\Z_{\ge0}^n$ and $\boldell\in[0,1,\ldots,p-1]^n$ we have
$$
a_{p\v k+\boldell}\is a_{\boldell} a_{\v k}\mod{p}.
$$
\end{corollary}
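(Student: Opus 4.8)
The plan is to derive a functional equation for the generating series $F(\v x)=1/P(\v x)=\sum_{\v k}a_{\v k}\v x^{\v k}$ modulo $p$ and then read off the congruence by matching base-$p$ digits coordinate by coordinate. Since $P_0=1$, the coefficients $a_{\v k}$ are genuine integers, so the congruences make sense. First I would invoke the Frobenius congruence $P(\v x)^p\is P(\v x^p)\mod{p}$. Because both sides are units in $\Z\pow{\v x}$ (their constant term is $P_0=1$), their inverses agree modulo $p$, and hence
\[
F(\v x)=\frac{P(\v x)^{p-1}}{P(\v x)^p}\is P(\v x)^{p-1}F(\v x^p)\mod{p},
\]
where $F(\v x^p)=\sum_{\v j}a_{\v j}\v x^{p\v j}$.

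The decisive structural input is the hypothesis that $P$ has degree $1$ in each variable, so $P(\v x)^{p-1}$ has degree at most $p-1$ in each variable. I would extract the coefficient of $\v x^{p\v k+\boldell}$ with $0\le\ell_i\le p-1$ from the right-hand side. A term $a_{\v j}\v x^{p\v j}$ of $F(\v x^p)$ can contribute only if $\v x^{p(\v k-\v j)+\boldell}$ occurs in $P(\v x)^{p-1}$, which requires $0\le p(k_i-j_i)+\ell_i\le p-1$ in every coordinate $i$; that is, $p(k_i-j_i)$ must lie in the interval $[-\ell_i,\,p-1-\ell_i]$, which has length $p-1$ and therefore contains the unique multiple of $p$ equal to $0$ and no other, forcing $j_i=k_i$. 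Hence only $\v j=\v k$ survives, and
\[
a_{p\v k+\boldell}\is a_{\v k}\,c_{\boldell}\mod{p},\qquad c_{\boldell}:=\text{coefficient of }\v x^{\boldell}\text{ in }P(\v x)^{p-1}.
\]

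Finally I would identify $c_{\boldell}$ with $a_{\boldell}$ modulo $p$ by specialising the same functional equation to $\v k=\v 0$: extracting the coefficient of $\v x^{\boldell}$ from $F(\v x)\is P(\v x)^{p-1}F(\v x^p)\mod{p}$ and repeating the digit argument shows that only $\v j=\v 0$ contributes, since $\ell_i-pj_i\ge0$ together with $\ell_i\le p-1$ forces $j_i=0$; thus $a_{\boldell}\is a_{\v 0}\,c_{\boldell}\is c_{\boldell}\mod{p}$ because $a_{\v 0}=1/P_0=1$. Substituting this back into the previous display yields $a_{p\v k+\boldell}\is a_{\boldell}a_{\v k}\mod{p}$, as claimed. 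The only delicate point is the digit-decoupling step, and it rests entirely on the degree-one-in-each-variable hypothesis, which guarantees that $P^{p-1}$ has $x_i$-degree at most $p-1$; without it a single low-order digit $\ell_i$ could be fed by several values of $j_i$, and the clean factorisation into $a_{\boldell}a_{\v k}$ would break.
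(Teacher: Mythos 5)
Your proof is correct, but it is self-contained where the paper simply invokes its general machinery: the paper proves this corollary by applying Corollary \ref{rationalcoefficients-corollary} (hence Theorem \ref{rationalcoefficients} and Proposition \ref{mainprop}) with $r=1$, $\rho=1$ and $d_1=\cdots=d_n=1$, which yields a $1$-state $p$-linear scheme $a_{p\v k+\boldell}\is\lambda_{\boldell}a_{\v k}\mod{p}$, and then normalizes via $a_{\v 0}=1$ exactly as you do. Your direct argument is in substance the specialization of that machinery: your functional equation $F(\v x)\is P(\v x)^{p-1}F(\v x^p)\mod{p}$ is the case $r=\rho=1$ of the paper's congruence \eqref{cartier} (the $p$-adic expansion there collapses to its $m=0$ term modulo $p$); your coefficient extraction along the residue classes $p\v k+\boldell$ is the Cartier operator $\cartier$ applied to $\v x^{-\boldell}/P(\v x)$; and your degree-decoupling argument (degree of $P^{p-1}$ at most $p-1$ in each variable) replaces the paper's box-closure support argument, which in the case $d_i=1$ says precisely that the only lattice point available as a state index is $\v 0$. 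What your route buys is an elementary, completely self-contained proof readable without Section \ref{sec:main}; what the paper's route buys is uniformity: the same argument runs modulo $p^r$ and for higher-degree $P$, where several states $\v x^{\v v}/P(\v x)^{\rho}$ are unavoidable and the clean one-term factorization fails --- a limitation you correctly identify in your closing remark.
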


To prove this, apply Corollary \ref{rationalcoefficients-corollary} with $r=1$ (hence $\rho=1$)
and $d_1=\cdots=d_n=1$. Then we get a 1-state $p$-linear scheme to compute
the numbers $a_{\v k}\mod{p}$. Since $a_{\v 0}=1$ our corollary follows. Of course the
diagonal sequence $a_{k,k,\ldots,k}$ has now the Lucas property modulo every prime $p$. 

In \cite{RoYa15} and \cite{RoZe14} the authors propose algorithms to construct finite $p$-automata
and in particular $p$-linear schemes for the computation of sequences $a_k\mod{p^r}$.
Unfortunately the
algorithms seem rather inefficient (except in 'small' cases) and the resulting upper
bounds for the number of states
seem exceedingly large with a strong dependence on $p$ (see \cite[Thm 2.4]{Straub22}).
These papers were the motivation for writing the
present note. Its method is a simplified version of the techniques in \cite{BeVlIII},
which studies the action of the so-called Cartier operator on spaces of rational functions.
However, the present paper can be completely understood without having to read \cite{BeVlIII}.

{\bf Acknowledgement}: Many thanks to Armin Straub for the stimulating discussions I had on
the subject of this paper and his encouragement. 

Many thanks also to the referee for the careful reading of this paper and for the
remarks that very much improved its exposition.

\section{Main tool}\label{sec:main}
The starting point in this paper is a Laurent polynomial $f$ in the variables $x_1,\ldots,x_n$
and coefficients in a ring $R$ which is either $\Z$ or $\Z[t]$. Let $p$ be any prime and let
$\sigma$ be the Frobius lift on $R$, which is the identity map on $\Z$ and which maps $t$
to $t^p$ in the ring $\Z[t]$.

Let $\mu$ be a bounded polytope in $\R^n$ with a finite set of proper faces removed and such that
$\Delta^\circ\subset\mu$. The original polytope is denoted by $\overline\mu$, the closure
of $\mu$.
In \cite{BeVlI},\cite{BeVlII},\cite{BeVlIII} we study the action of the Cartier
operator $\cartier$ on the $R$-module $\Omega_f(\mu)$
of rational functions of the form $(r-1)!A(\v x)/f(\v x)^r$, where $A(\v x)$ is a Laurent
polynomial with coefficients in $R$ and support in $r\mu$. In Section \ref{sec:constant-term}
we use $\mu=\Delta^\circ$ and in Section \ref{powerseries} we use an extension of $\Delta^\circ$,
the so-called box closure. 
Note that in \cite{BeVlI} we use only $\mu$ such that $\overline\mu=\Delta$. 
The main property of these sets is the following.

\begin{lemma}\label{muproperty}
Let $\mu$ be a bounded polytope with a finite number of proper subfaces removed. Denote the original
polytope by $\overline\mu$. Then, for all positive integers $a,b$ we have
$$
a\mu+b\overline\mu\subset(a+b)\mu.
$$
Here we use the notation $X+Y=\{\v x+\v y|\v x\in X,\v y\in Y\}$ for any two sets $X,Y\subset\R^n$ and
$kX$ the $k$-fold sum of $k$ copies of $X$.  
\end{lemma}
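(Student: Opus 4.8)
The plan is to reduce the Minkowski-sum statement to a statement about ordinary scalar dilations, where it becomes a one-line convexity argument. Write $\overline\mu$ for the polytope and $S=F_1\cup\cdots\cup F_m$ for the removed union of proper faces, so that $\mu=\overline\mu\setminus S$. I will use throughout the defining extremal property of a face $F$ of a convex set $C$: if a point of $F$ is a convex combination $\sum_i\lambda_i\v p_i$ of points $\v p_i\in C$ with all $\lambda_i>0$, then every $\v p_i$ lies in $F$. (For two points this is the usual definition of a face; the general case follows by a short induction.)

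First I would establish that for the punctured set $\mu$ the $k$-fold Minkowski sum coincides with the scalar dilation, i.e.\ $k\mu=\{\v x:\v x/k\in\mu\}$, and likewise $b\overline\mu=\{\v x:\v x/b\in\overline\mu\}$ (the latter being the standard fact $C+C=2C$ for convex $C$). The inclusion $\supseteq$ is immediate, since $k\v m=\v m+\cdots+\v m$. For $\subseteq$, suppose $\v x=\v u_1+\cdots+\v u_k$ with each $\v u_i\in\mu\subseteq\overline\mu$; then $\v x/k=\tfrac1k\sum_i\v u_i$ is a convex combination with strictly positive weights, hence lies in $\overline\mu$, and if it lay in some removed face $F_j$ the extremal property would force every $\v u_i\in F_j\subseteq S$, contradicting $\v u_i\in\mu$. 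Thus $\v x/k\in\mu$.

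With this reduction the lemma becomes purely pointwise: given $\v p:=\v y/a\in\mu$ and $\v q:=\v z/b\in\overline\mu$ for $\v y\in a\mu$, $\v z\in b\overline\mu$, I must show that the point
$$
\frac{\v y+\v z}{a+b}=\frac{a}{a+b}\,\v p+\frac{b}{a+b}\,\v q
$$
lies in $\mu$. It is a convex combination of $\v p,\v q\in\overline\mu$ with strictly positive weights (here $a,b\ge1$ is used), so it lies in $\overline\mu$; were it in a removed face $F_j$, the extremal property would again force $\v p\in F_j\subseteq S$, contradicting $\v p\in\mu$. Hence it avoids $S$, lies in $\mu$, and therefore $\v y+\v z\in(a+b)\mu$.

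The only genuinely delicate point is the first step: one must check that removing faces is compatible with taking Minkowski sums, so that $(a+b)\mu$ really is the scalar dilation $\{\v x:\v x/(a+b)\in\mu\}$ rather than something strictly larger. This is exactly where the extremal characterization of faces — rather than mere convexity — is essential, and it is the fact I would isolate as a preliminary sub-lemma before assembling the final convex-combination argument.
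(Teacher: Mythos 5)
Your proof is correct, but there is nothing in the paper to compare it against: the paper states Lemma \ref{muproperty} without proof, treating it as the basic property of the sets $\mu$ that is then invoked in Proposition \ref{mainprop} and Theorem \ref{rationalcoefficients}; the verification is left implicit. Your argument supplies that missing proof, and it is sound. Both of your steps rest on a single correct mechanism: a convex combination with strictly positive weights of points of $\overline\mu$ lies in $\overline\mu$, and if it lands in a removed face $F_j$ then the extremal property of faces drags every constituent point into $F_j$, contradicting that at least one of them lies in $\mu$. In fact this one statement (a strict convex combination of points of $\overline\mu$, at least one of which lies in $\mu$, again lies in $\mu$) yields the lemma in one stroke: for $\v y=\v u_1+\cdots+\v u_a+\v v_1+\cdots+\v v_b$ with $\v u_i\in\mu$ and $\v v_j\in\overline\mu$, the point $\v y/(a+b)$ is such a combination, hence lies in $\mu$, and therefore $\v y\in(a+b)\mu$; so your two-step organization (first identifying Minkowski powers of $\mu$ with dilations, then the two-point step) could be compressed, though it is not wrong. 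The only detail you should make explicit is the status of the ``extremal property'': for polytopes the standard definition of a proper face is $F=\overline\mu\cap\{\v x:\ell(\v x)=d\}$ for a linear functional $\ell$ with $\ell\le d$ on $\overline\mu$, and the property you use follows from this in one line, since $\sum_i\lambda_i\ell(\v p_i)=d$ with all $\lambda_i>0$ and all $\ell(\v p_i)\le d$ forces $\ell(\v p_i)=d$ for every $i$. With that remark added, your proof is complete and self-contained.
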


Suppose we are given an infinite Laurent
series $\sum_{\v k}a_{\v k}\v x^{\v k}$, $a_{\v k}\in R$, with support in a positive cone
with $\v 0$ as top. Then $\cartier$ is defined by
$$
\cartier\left(\sum_{\v k}a_{\v k}\v x^{\v k}\right)=\sum_{\v k}a_{p\v k}\v x^{\v k}.
$$
The action of $\cartier$ on a rational function $R(\v x)\in\Omega_f(\mu)$ is obtained
by expansion of $R(\v x)$ in an infinite Laurent series and then application of 
$\cartier$. Unfortunately the image is a Laurent series which usually does not come from
an element in $\Omega_f(\mu)$.
Fortunately, in \cite{BeVlI} it is shown that the image does lie in the $p$-adic completion
of $\Omega_{f^\sigma}(\mu)$. Here $f^\sigma$ is simply $f$ with $\sigma$ applied to its
coefficients. 

In the present paper we will only need a mod $p^r$ version of this fact and for its
derivation we need the following easy lemma.

\begin{lemma}
Let $K(\v x),L(\v x)$ be two infinite Laurent series in $x_1,\ldots,x_n$ supported in
the same cone with top $\v 0$. Then,
$$
\cartier(K(\v x^p)L(\v x))=K(\v x)\cartier(L(\v x)).
$$
\end{lemma}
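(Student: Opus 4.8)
The plan is to prove the identity by a direct coefficient computation, since both sides are formal Laurent series and $\cartier$ is defined termwise. First I would write the two series as $K(\v x)=\sum_{\v i}b_{\v i}\v x^{\v i}$ and $L(\v x)=\sum_{\v j}a_{\v j}\v x^{\v j}$, so that $K(\v x^p)=\sum_{\v i}b_{\v i}\v x^{p\v i}$ and the product becomes
$$
K(\v x^p)L(\v x)=\sum_{\v i,\v j}b_{\v i}a_{\v j}\v x^{p\v i+\v j}.
$$
The key observation is that the monomial $\v x^{p\v i+\v j}$ has exponent divisible by $p$ precisely when $\v j$ is divisible by $p$; writing $\v j=p\v j'$, the exponent is $p(\v i+\v j')$. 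Since $\cartier$ retains exactly the terms whose exponent lies in $p\Z^n$ and divides that exponent by $p$, it follows that
$$
\cartier(K(\v x^p)L(\v x))=\sum_{\v i,\v j'}b_{\v i}a_{p\v j'}\v x^{\v i+\v j'}.
$$
On the other hand $\cartier(L(\v x))=\sum_{\v j'}a_{p\v j'}\v x^{\v j'}$, so multiplying by $K(\v x)$ produces the very same double sum; collecting by the total exponent $\v m=\v i+\v j'$ then shows that the two sides agree coefficient by coefficient.

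The one point requiring care is that these manipulations take place among infinite Laurent series, so I would first check that the product $K(\v x^p)L(\v x)$ is well defined and that the rearrangement of the double sum is legitimate. This is where the hypothesis that $K$ and $L$ are supported in the same cone with top $\v 0$ enters: because the cone is pointed, for any fixed exponent $\v k$ there are only finitely many pairs $(\v i,\v j)$ in the (dilated) cone with $p\v i+\v j=\v k$, so each coefficient is a finite sum and the reindexing $\v j=p\v j'$, $\v m=\v i+\v j'$ causes no convergence trouble. I expect this bookkeeping to be the only genuine obstacle; the algebraic identity itself is immediate once the role of the factor $p$ in the exponent is isolated, and it is really just the projection (or Frobenius) formula for the Cartier operator.
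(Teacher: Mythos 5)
Your proof is correct, and it is precisely the routine verification the paper has in mind: the paper states this lemma without proof (calling it easy), and your coefficient-by-coefficient computation — noting that an exponent $p\v i+\v j$ lies in $p\Z^n$ exactly when $\v j$ does, then reindexing $\v j=p\v j'$ — is the standard argument that justifies it. Your attention to the finiteness of the coefficient sums, guaranteed by the pointedness of the cone with top $\v 0$, is exactly the role that hypothesis plays, so nothing is missing.
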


Let $r$ be a positive integer and consider the
module of rational functions $\frac{A(\v x)}{f(\v x)^r}$ with support of $A(\v x)$ in $r\mu$
by $M(r)$. By $M^\sigma(r)$ we denote module generated by the elements of $M(r)$ where we have applied $\sigma$ to all coefficients of the rational functions.

\begin{proposition}\label{mainprop}
Let $r$ be a positive integer and $p$ a prime. Choose an integer $\rho$ such that
$\rho-\ceil{\rho/p}\ge r-1$. Then the Cartier operator $\cartier$ modulo $p^r$
maps $M(\rho)$ to $M^\sigma(\rho)$. 
\end{proposition}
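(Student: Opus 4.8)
The plan is to turn the denominator $f(\v x)^\rho$ into a power of $f^\sigma(\v x^p)$ up to a factor supported at $p$-th powers, so that the multiplicativity identity $\cartier(K(\v x^p)L(\v x))=K(\v x)\cartier(L(\v x))$ can be used to push the $\v x^p$-part through $\cartier$. The congruence I would build on is $f(\v x)^p\is f^\sigma(\v x^p)\mod{p}$, valid coefficientwise by Fermat's little theorem on $\Z$ and by the Frobenius on $\F_p[t]$. Writing $f(\v x)^p=f^\sigma(\v x^p)+pg(\v x)$ defines a Laurent polynomial $g$, and since $f^p$ and $f^\sigma(\v x^p)$ are both supported in $p\,\supp(f)\subseteq p\overline\mu$, so is $g$.

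First I would take a general element $A/f^\rho$ of $M(\rho)$, so $\supp(A)\subseteq\rho\mu$, and set $m=\ceil{\rho/p}$, so that $pm-\rho\ge0$. Rewriting $A/f^\rho=Af^{pm-\rho}/(f^p)^m$, substituting $f^p=f^\sigma(\v x^p)+pg$, and expanding the denominator by the binomial series in the $p$-adic completion, the terms carrying $p^i$ with $i\ge r$ vanish modulo $p^r$, leaving
\be
\frac{A}{f^\rho}\is\sum_{i=0}^{r-1}\binom{-m}{i}\,\frac{p^i\,A\,f^{pm-\rho}\,g^i}{f^\sigma(\v x^p)^{m+i}}\mod{p^r}.
\ee
Each summand has the form $K_i(\v x^p)L_i(\v x)$ with $K_i(\v y)=f^\sigma(\v y)^{-(m+i)}$ and $L_i=\binom{-m}{i}p^iAf^{pm-\rho}g^i$, so applying $\cartier$ termwise and using the multiplicativity identity turns the right-hand side into $\sum_{i=0}^{r-1}\cartier(L_i)/f^\sigma(\v x)^{m+i}$.

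The heart of the proof, and the step I expect to be the main obstacle, is the support bookkeeping that places each term in $M^\sigma(\rho)$; this is exactly where Lemma \ref{muproperty} and the hypothesis $\rho-\ceil{\rho/p}\ge r-1$ enter. I would first bound $\supp(L_i)\subseteq\rho\mu+(pm-\rho+ip)\overline\mu$ and apply Lemma \ref{muproperty} to absorb the $\overline\mu$-part, obtaining $\supp(L_i)\subseteq p(m+i)\mu$; hence $\supp(\cartier(L_i))\subseteq(m+i)\mu$ and $\cartier(L_i)/f^\sigma(\v x)^{m+i}\in M^\sigma(m+i)$. The hypothesis gives $m+i\le m+r-1\le\rho$ for all $i\le r-1$, so I can pass to the common denominator $f^\sigma(\v x)^\rho$ by multiplying the $i$-th numerator by $f^\sigma(\v x)^{\rho-m-i}$; a second application of Lemma \ref{muproperty} keeps its support inside $\rho\mu$. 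Summing then exhibits $\cartier(A/f^\rho)$ modulo $p^r$ as an element of $M^\sigma(\rho)$. The only subtleties to handle with care are the $p$-adic interpretation of the binomial expansion (only finitely many terms survive modulo $p^r$) and the degenerate cases where $pm-\rho+ip$ or $\rho-m-i$ is zero, where Lemma \ref{muproperty} is simply not invoked.
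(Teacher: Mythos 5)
Your proof is correct and takes essentially the same route as the paper: the same rewriting $A/f^\rho=Af^{p\ceil{\rho/p}-\rho}/(f^p)^{\ceil{\rho/p}}$, the same $p$-adic binomial expansion of the denominator truncated modulo $p^r$ (the factor $p^i$ kills terms with $i\ge r$), the same use of the multiplicativity identity for $\cartier$, and the same support bookkeeping via Lemma \ref{muproperty} combined with the hypothesis $\ceil{\rho/p}+r-1\le\rho$. The only differences are immaterial: your sign convention $f^p=f^\sigma(\v x^p)+pg$ versus the paper's $-pG$, truncating modulo $p^r$ before rather than after applying $\cartier$, and your explicit passage to the common denominator $f^\sigma(\v x)^\rho$, a step the paper leaves implicit.
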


\begin{proof}
Rewrite $A(\v x)/f(\v x)^{\rho}$ as 
$A(\v x)f(\v x)^{p\ceil{\rho/p}-\rho}/f(\v x)^{p\ceil{\rho/p}}$.
Then note that
$f(\v x)^p=f^\sigma(\v x^p)-pG(\v x)$ for some Laurent polynomial $G$
with coefficients in $R$ and support in $p\Delta$. Then we use the $p$-adic expansion
\[
\frac{A(\v x)}{f(\v x)^{\rho}}=
\frac{A(\v x)f(\v x)^{p\ceil{\rho/p}-\rho}}
{(f^\sigma(\v x^p)-pG(\v x))^{\ceil{\rho/p}}}
=\sum_{m\ge0}p^m{\ceil{\rho/p}+m-1\choose m}
\frac{G(\v x)^m}{f^\sigma(\v x^p)^{m+\ceil{\rho/p}}}
A(\v x)f(\v x)^{p\ceil{\rho/p}-\rho}.
\]
Apply $\cartier$. We find that
\[
\cartier\left(\frac{A(\v x)}{f(\v x)^{\rho}}\right)=\sum_{m\ge0}p^m
{\ceil{\rho/p}+m-1\choose m}\frac{Q_m(\v x)}{f^\sigma(\v x)^{m+\ceil{\rho/p}}},
\]
where $Q_m(\v x)=\cartier(A(\v x)G(\v x)^mf(\v x)^{p\ceil{\rho/p}-\rho})$. The support of 
$A(\v x)G(\v x)^mf(\v x)^{p\ceil{\rho/p}-\rho}$ is contained in
$$
\rho\mu +mp\Delta+(p\ceil{\rho/p}-\rho)\Delta\subset \rho\mu +(mp+p\ceil{\rho/p}-\rho)\overline\mu.
$$
According to Lemma \ref{muproperty} this is contained in $(mp+p\ceil{\rho/p})\mu$,
hence the support of $Q_m$ lies in $(m+\ceil{\rho/p})\mu$. Its coefficients are still in $R$.
The terms with $m\ge r$ all vanish modulo $p^r$. Hence

\begin{equation}\label{cartier}
\cartier\left(\frac{A(\v x)}{f(\v x)^{\rho}}\right)\is\sum_{m=0}^{r-1}p^m
{\ceil{\rho/p}+m-1\choose m}\frac{Q_m(\v x)}{f^\sigma(\v x)^{m+\ceil{\rho/p}}}\mod{p^r}.
\end{equation}
Since $m+\ceil{\rho/p}\le r-1+\ceil{\rho/p}\le\rho$ we see that modulo $p^r$ the
right hand side is in $M^\sigma(\rho)$.
\end{proof}

\section{Constant terms of powers of Laurent polynomials}\label{sec:constant-term}
As a warm up to the proof of Theorem \ref{lucaslemma} we give a simple proof.
\begin{proof}[Proof of Theorem \ref{main}]
Let notations and assumptions be as in Theorem \ref{main}. Consider the equality
$$
\frac{1}{1-tg(\v x)}=\frac{\sum_{k=0}^{p-1}t^kg(\v x)^k}{1-t^pg(\v x)^p}\is 
\frac{\sum_{k=0}^{p-1}t^kg(\v x)^k}{1-t^pg(\v x^p)}\mod{p}.
$$
Expand on both sides in a power series in $t$ with Laurent polynomials in $x_1,\ldots,x_n$
as coefficients. Apply the Cartier operator on both sides. We get
$$
\cartier\left(\frac{1}{1-tg(\v x)}\right)\is \frac{1}{1-t^pg(\v x)}
\cartier\left(\sum_{k=0}^{p-1}t^kg(\v x)^k\right)\mod {p}.
$$
Take any monomial $\v x^{\v w}$ in $\sum_{k=0}^{p-1}t^kg(\v x)^k$. In order to survive $\cartier$
we must have $\v w=p\v v$ for some lattice point $\v v$. Note that $p\v v\in (p-1)\Delta$,
hence $\v v\in\Delta^\circ$. By our assumption on the lattice points in $\Delta^\circ$, $\v v=\v0$.
We obtain
$$
\cartier\left(\frac{1}{1-tg(\v x)}\right)\is \frac{1}{1-t^pg(\v x)}
\sum_{k=0}^{p-1}g_kt^k\mod{p}.
$$
Take the constant term in $x_1,\ldots,x_n$ on both sides. We get
$$
\sum_{\ell\ge0}g_\ell t^\ell\is\left(\sum_{k=0}^{p-1}g_kt^k\right)
\left(\sum_{m\ge0}g_mt^{pm}\right)\mod{p}.
$$
By comparison of the coefficients of $t^\ell=t^{pm+k}$ on both sides we get $g_{pm+k}\is g_kg_m\mod{p}$,
which proves our theorem.
\end{proof}

We now switch to the general situation. 
We apply Proposition \ref{mainprop} to construct $p$-linear schemes for sequences of
the form 
$$
\text{ct}[q(\v x)g(\v x)^k]\mod{p^r},
$$
where 'ct' is the operation of selecting the
constant term and $q(\v x),g(\v x)\in\Z[x_1^{\pm1},\ldots,x_n^{\pm1}]$ are Laurent polynomials. 
Let $f(\v x)=1-tg(\v x)$ and $\mu=\Delta^\circ$, the euclidean interior of
the Newton polytope $\Delta$.
Let $p$ be a prime, $r$ a positive integer and $\rho$ an
integer such that $r-1\le \rho-\ceil{\rho/p}$. For any monomial 
$\v x^{\v u}\in \rho\Delta^\circ$ expand
$\v x^{\v u}/f(\v x)^{\rho}$ as power series in $t$ and take the constant term of each
coefficient. We get a power series
$$
F_{\v u}(t):=\text{ct}\left[\frac{\v x^{\v u}}{(1-tg(\v x))^{\rho}}\right]=
\sum_{k\ge0}{\rho+k-1\choose \rho}\text{ct}[\v x^{\v u}g(\v x)^k]t^k,
$$
which is the generating series of the numbers ${\rho+k-1\choose \rho}
\text{ct}[\v x^{\v u}g(\v x)^k]$. 
The functions $t^\ell F_{\v u}(t)$ with $\ell=0,1,\ldots,\rho-1$ and $\v u\in\rho\Delta^\circ$
will be the states of our $p$-linear scheme. Usually we are interested in
sequences of the form $\text{ct}[g(\v x)^k]$, which correspond to the constant
term of $1/f(\v x)$. However, we can use the observation 
$$
\frac{1}{f(\v x)}=\frac{f(\v x)^{\rho-1}}{f(\v x)^{\rho}}.
$$
So the constant term of $1/f(\v x)\mod{p^r}$ is a $\Z$-linear combination of
the states $t^\ell F_{\v v}(t)$ with $\v v\in \rho\Delta^\circ$ and $\ell=0,1,\ldots,\rho-1$.

\begin{theorem}\label{lucaslemma}
Let $p,r,\rho$ and $F_{\v u}{t}$ be as before.
Then there exist polynomials $\lambda_{\v u,\ell,\v v,m}(t)$ in $t$ of degree $<p$ such that
$$
t^\ell F_{\v u}(t)\is\sum_{\v v\in \rho\Delta^\circ}
\sum_{m=0}^{\rho-1}\lambda_{\v u,\ell,\v v,m}(t)(t^p)^m
F_{\v v}(t^p)\mod{p^r}.
$$
for all $\v u\in (\rho\Delta^\circ)_\Z$ and $\ell=0,1,\ldots,\rho-1$. 
\end{theorem}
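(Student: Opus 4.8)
The plan is to run the argument from the proof of Theorem \ref{main}, but with the single element $1/f(\v x)$ replaced by the family $\v x^{\v u}/f(\v x)^\rho$ and with Proposition \ref{mainprop} in place of the ad hoc reduction used there. Set $f(\v x)=1-tg(\v x)$ and $\mu=\Delta^\circ$, so that $f^\sigma(\v x)=1-t^pg(\v x)$ and hence $F_{\v v}(t^p)=\text{ct}[\v x^{\v v}/f^\sigma(\v x)^\rho]$ for every lattice point $\v v\in\rho\Delta^\circ$. Fix $\v u\in(\rho\Delta^\circ)_\Z$ and apply $\cartier$ to $\v x^{\v u}/f(\v x)^\rho\in M(\rho)$. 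By Proposition \ref{mainprop}, modulo $p^r$ the image lies in $M^\sigma(\rho)$, so
$$
\cartier\left(\frac{\v x^{\v u}}{f(\v x)^\rho}\right)\is\frac{A(\v x)}{f^\sigma(\v x)^\rho}=\sum_{\v v\in(\rho\Delta^\circ)_\Z}\frac{c_{\v u,\v v}(t)\,\v x^{\v v}}{f^\sigma(\v x)^\rho}\mod{p^r},
$$
where $A$ is supported in $\rho\mu$ and its coefficients $c_{\v u,\v v}(t)$ lie in $\Z[t]$.

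Next I would take the constant term in $\v x$ of both sides. Since the definition of $\cartier$ shows that the coefficient of $\v x^{\v 0}$ is unchanged by $\cartier$ (it reads off $a_{p\cdot\v 0}=a_{\v 0}$), we have $\text{ct}[\cartier(S)]=\text{ct}[S]$ for every Laurent series $S$; applied to $S=\v x^{\v u}/f(\v x)^\rho$ this produces $F_{\v u}(t)$ on the left. On the right, taking constant terms term by term and pulling the $t$-coefficients out of the $\v x$-operation $\text{ct}$ yields $\sum_{\v v}c_{\v u,\v v}(t)F_{\v v}(t^p)$. Thus $F_{\v u}(t)\is\sum_{\v v}c_{\v u,\v v}(t)F_{\v v}(t^p)\mod{p^r}$, and multiplying through by $t^\ell$ gives the desired congruence once the coefficients are put in the stated normal form.

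The one point requiring real work is the bound on the $t$-degree of the coefficients $c_{\v u,\v v}(t)$, which is exactly what forces the $\lambda$'s to have degree $<p$ and the index $m$ to run only up to $\rho-1$. Here I would read the degrees directly off the explicit expansion \eqref{cartier} from the proof of Proposition \ref{mainprop}. Because $\cartier$ acts only on the $\v x$ variables and leaves the coefficient ring $\Z[t]$ untouched, it preserves $t$-degree; the factor $G(\v x)$ coming from $f(\v x)^p=f^\sigma(\v x^p)-pG(\v x)$ has $\deg_t G\le p$, the factor $f(\v x)^{p\ceil{\rho/p}-\rho}$ has $t$-degree $p\ceil{\rho/p}-\rho$, and clearing the denominator to $f^\sigma(\v x)^\rho$ multiplies the numerator indexed by $m$ in \eqref{cartier} by $f^\sigma(\v x)^{\rho-m-\ceil{\rho/p}}$ of $t$-degree $p(\rho-m-\ceil{\rho/p})$. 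Summing these contributions, the dependence on the summation index cancels and each numerator term has $t$-degree at most $(p-1)\rho$; hence $\deg_t A\le(p-1)\rho$.

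Finally, for general $\ell\in\{0,\ldots,\rho-1\}$ the coefficient $t^\ell c_{\v u,\v v}(t)$ has $t$-degree at most $(p-1)\rho+(\rho-1)=p\rho-1$, so it admits a unique base-$t^p$ expansion $t^\ell c_{\v u,\v v}(t)=\sum_{m=0}^{\rho-1}\lambda_{\v u,\ell,\v v,m}(t)(t^p)^m$ with each $\lambda_{\v u,\ell,\v v,m}$ of degree $<p$. Substituting this into $t^\ell F_{\v u}(t)\is\sum_{\v v}t^\ell c_{\v u,\v v}(t)F_{\v v}(t^p)$ yields exactly the claimed identity. The main obstacle is therefore not conceptual but the careful degree bookkeeping of the previous paragraph, which must be carried out uniformly in the summation index so that the two constraints, namely $\deg\lambda<p$ and $m\le\rho-1$, hold simultaneously.
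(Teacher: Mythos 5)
Your proposal is correct and follows essentially the same route as the paper's own proof: apply the explicit expansion \eqref{cartier} with $A(\v x)=\v x^{\v u}$, put everything over the common denominator $f^\sigma(\v x)^{\rho}$, take constant terms in $x_1,\ldots,x_n$ (using that $\cartier$ fixes the coefficient of $\v x^{\v 0}$ and that $\text{ct}[\v x^{\v v}/f^\sigma(\v x)^\rho]=F_{\v v}(t^p)$), and then expand $t^\ell$ times the resulting coefficient polynomials in base $t^p$. The only divergence is the intermediate degree bound: the paper asserts $\deg_t q_{\v u,\v v}\le p(r-1)+p\ceil{\rho/p}-\rho$, whereas your bookkeeping, which correctly includes the factors $f^\sigma(\v x)^{\rho-m-\ceil{\rho/p}}$ of $t$-degree $p(\rho-m-\ceil{\rho/p})$ introduced by clearing denominators, gives $(p-1)\rho$; both bounds lead to the same needed estimate $p\rho-1$ after multiplying by $t^\ell$, so the discrepancy is inconsequential, and your version is the more carefully justified one.
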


\begin{remark}\label{states-bound}
Note that we have found a $p$-linear scheme with states $t^\ell F_{\v u}\mod{p^r}$,
where $\ell=0,1,\ldots,\rho-1$ and $\v u\in \rho\Delta^\circ$. The cardinality of 
$(\rho\Delta^\circ)_\Z$ is bounded polynomially in $\rho$
of degree $n$. For every prime $p$ the choice $\rho=2r$ suffices so the number of such
states has order $O(r^{n+1})$, independent of $p$.
\end{remark}

\begin{proof}[Proof of Theorem \ref{lucaslemma}]
Consider \eqref{cartier} with $A(\v x)=\v x^{\v u}$ and expand as
\begin{equation}\label{lucas}
\cartier\left(\frac{\v x^{\v u}}{f(\v x)^{\rho}}\right)
\is \frac{1}{f^\sigma(\v x)^{\rho}}\sum_{\v v\in \rho\mu}q_{\v u,\v v}(t)\v x^{\v v}\mod{p^r}.
\end{equation}
The coefficients $q_{\v u,\v v}(t)$ are polynomials in $t$ and one easily verifies that
their degrees are at most $p(r-1)+p\ceil{\rho/p}-\rho$.
Let us choose polynomials $\lambda_{\v u,\ell,\v v,m}$
of degree $<p$ such that
$$
t^\ell q_{\v u,\v v}=\sum_{m=0}^{\rho-1}\lambda_{\v u,\ell,\v v,m}(t)t^{pm}.
$$
This is possible because the degree of the left hand side is at most
$$p(r-1)+p\ceil{\rho/p}-\rho+\rho-1=p(r-1+\ceil{\rho/p})-1\le p\rho-1.$$
The constant terms with respect to $x_1,\ldots,x_n$ on both sides of \eqref{lucas}
are equal modulo $p^r$ and by a straightforward computation our assertion follows.
\end{proof}

\begin{proof}[Proof of Theorem \ref{main2}]
Set $r=\rho=1$. Inspection of \eqref{cartier} with $A(\v x)=\v x^{\v u}$ yields
\be\label{cartier1}
\cartier\left(\frac{\v x^{\v u}}{1-tg(\v x)}\right)\is\frac{1}{1-t^pg(\v x)}
\cartier\left(\v x^{\v u}(1-tg(\v x))^{p-1}\right)\mod{p}.
\ee
The support of $\v x^{\v u}(1-tg(\v x))^{p-1}$ is in $p\Delta^\circ$ and we get
$$
\cartier\left(\v x^{\v u}(1-tg(\v x))^{p-1}\right)=
\sum_{\v v\in\Delta^\circ_\Z}H_{\v u\v v}(t)\v x^{\v v}.
$$
We complete the proof of the theorem by expansion of \eqref{cartier1} as power series
in $t$ and taking the constant term with respect to $x_1,\ldots,x_n$ on both sides.
\end{proof}

\section{Power series coefficients of rational functions}\label{powerseries}
Our second main application of Proposition \ref{mainprop} is to the coefficients
of power series expansions of rational functions.
Let $P$ be a polynomial in $x_1,\ldots,x_n$ with integer coefficients and a constant
term which is not divisible by the prime $p$. Let us call this constant $P_0$.
Let $\Delta$ be the Newton polytope of $P$.
For any polynomial $Q$ with integer coefficients we can expand $Q/P$ as
power series. Let us write
$$
\frac{Q(\v x)}{P(\v x)}=\sum_{\v k}a_{\v k}\v x^{\v k},
$$
and note that $a_{\v k}\in\Z[1/P_0]$ for all $\v k$. We will show that for every $r\ge1$
there is a $p$-linear scheme which generates the numbers $a_{\v k}\mod{p^r}$. 
In particular this holds for coefficients $a_{k\v v}$ with $\v v=(1,1,\ldots,1)$
and $k\in\Z_{\ge0}$. These are the diagonal elements of the power series. 

In Theorem \ref{rationalcoefficients} we use the concept of \emph{box closure}.
For any $\v x\in\R^n$ with non-negative entries we denote 
$B(\v x)=[0,x_1]\times\cdots\times[0,x_n]$ where $\v x=(x_1,\ldots,x_n)$.
We take $B(\v x)$ empty if $x_i<0$ for at least one index $i$.
For any subset $S\subset\R^n$ we define its box closure by $B(S)=\cup_{\v x\in S}B(\v x)$.

For us, the main property of box closures is that for any $\boldell\in\R_{\ge0}^n$ we
have $B(S-\boldell)\subset B(S)$, where $S-\boldell$ is the translate of $S$ over the vector $-\boldell$. 

In the following theorem we heavily use $B(\Delta^\circ)$, where $\Delta$ is the Newton polytope of $P$.

\begin{theorem}\label{rationalcoefficients}
Let $p$ be a prime not dividing $P_0$. Choose $\rho$ such that $r-1\le \rho-\ceil{\rho/p}$.
For every $\v u\in (\rho B(\Delta^\circ))_\Z$ consider the power series
$$
\frac{\v x^{\v u}}{P(\v x)^{\rho}}=\sum_{\v k}a_{\v u,\v k}\v x^{\v k},
$$
where $a_{\v u,\v k}\in\Z[1/P_0]$. Then there exist $\lambda_{\v u,\v v,\boldell}\in\Z$
with $\v u,\v v\in (\rho B(\Delta^\circ))_\Z$ and 
$\boldell\in[0,1,\ldots,p-1]^n$ such that for every $\v k$ 
we have
$$
a_{\v u,p\v k+\boldell}\is\sum_{\v v\in (\rho B(\Delta^\circ))_\Z}\lambda_{\v u,\v v,\boldell}
a_{\v v,\v k}\mod{p^r}.
$$
\end{theorem}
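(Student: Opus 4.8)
The plan is to imitate the constant-term arguments of Section~\ref{sec:constant-term}, replacing the operation ``ct'' (extraction of the constant coefficient) by extraction of the $\boldell$-coefficient, and replacing $\Delta^\circ$ by its box closure $B(\Delta^\circ)$. The key structural input is Proposition~\ref{mainprop}: since $\rho$ is chosen so that $\rho-\ceil{\rho/p}\ge r-1$, the Cartier operator $\cartier$ modulo $p^r$ maps $M(\rho)$ into $M^\sigma(\rho)$. Here I would apply it with $f(\v x)=P(\v x)$ (so there is no $t$ variable, and $\sigma$ is the identity on $\Z$, i.e.\ $f^\sigma=P$), and with the polytope $\mu=B(\Delta^\circ)$ in place of $\Delta^\circ$. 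The one thing to check is that $B(\Delta^\circ)$ really is an admissible $\mu$, that is, a bounded polytope with a finite set of proper faces removed satisfying $\Delta^\circ\subset\mu$; this is where I would invoke the stated box-closure property $B(S-\boldell)\subset B(S)$, which plays the role that Lemma~\ref{muproperty} played before.

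Concretely, I would start from the rational function $\v x^{\v u}/P(\v x)^{\rho}$ with $\v u\in(\rho B(\Delta^\circ))_\Z$, which lies in $M(\rho)$, and apply Proposition~\ref{mainprop}. This yields, modulo $p^r$, an expression
\[
\cartier\!\left(\frac{\v x^{\v u}}{P(\v x)^{\rho}}\right)\is
\frac{1}{P(\v x)^{\rho}}\sum_{\v v\in(\rho B(\Delta^\circ))_\Z}c_{\v u,\v v}\,\v x^{\v v}\mod{p^r},
\]
for suitable integer coefficients $c_{\v u,\v v}$, where the support bound on the numerator $Q_m$ from \eqref{cartier} guarantees that every surviving monomial $\v x^{\v v}$ has exponent $\v v$ in $\rho B(\Delta^\circ)$. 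The relevant support computation is $\rho\mu+mp\Delta+(p\ceil{\rho/p}-\rho)\Delta$, and I would run it exactly as in the proof of Proposition~\ref{mainprop}, now using the box-closure inclusion to absorb the translations into $\rho B(\Delta^\circ)$; this is the analogue of the Newton-polytope bookkeeping in the constant-term case.

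The final step is to read off the coefficient of $\v x^{\boldell}$ from both sides for each fixed $\boldell\in[0,1,\ldots,p-1]^n$. By definition of $\cartier$, the $\boldell$-coefficient of the left-hand series $\cartier(\v x^{\v u}/P^{\rho})=\sum_{\v k}a_{\v u,p\v k}\v x^{\v k}$ shifted appropriately gives $a_{\v u,p\v k+\boldell}$, while on the right-hand side, since $\v v$ ranges over $(\rho B(\Delta^\circ))_\Z$ and $\v x^{\v v}/P(\v x)^{\rho}=\sum_{\v k}a_{\v v,\v k}\v x^{\v k}$, extracting the same coefficient produces a $\Z$-linear combination of the $a_{\v v,\v k}$; setting $\lambda_{\v u,\v v,\boldell}:=c_{\v u,\v v'}$ for the appropriate shifted index gives the claimed congruence. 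The one bookkeeping subtlety, and the step I expect to be the main obstacle, is tracking the index shift caused by multiplying $\v x^{\v v}$ against $1/P(\v x)^{\rho}$ versus directly against the coefficient series: I must verify that the $\boldell$-coefficient on the right genuinely matches a fixed linear combination of the $a_{\v v,\v k}$ (independent of $\v k$) and that the box-closure membership $\v v\in(\rho B(\Delta^\circ))_\Z$ is preserved under the translation by $-\boldell$, which is precisely guaranteed by $B(S-\boldell)\subset B(S)$. Once that is confirmed, the $\lambda_{\v u,\v v,\boldell}$ are the desired integers and the proof is complete.
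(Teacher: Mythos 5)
Your overall framework is the right one---Proposition \ref{mainprop} with $f=P$ and $\mu=B(\Delta^\circ)$, followed by coefficient extraction---but the mechanism by which you extract the congruence for general $\boldell$ does not work, and this is exactly the point where your proposal falls short. You apply $\cartier$ to the \emph{unshifted} function $\v x^{\v u}/P(\v x)^{\rho}$. By definition, $\cartier\left(\sum_{\v m}a_{\v u,\v m}\v x^{\v m}\right)=\sum_{\v k}a_{\v u,p\v k}\v x^{\v k}$: every coefficient $a_{\v u,\v m}$ with $\v m$ not of the form $p\v k$ is simply discarded. So the series $\cartier(\v x^{\v u}/P^{\rho})$ contains no information whatsoever about the numbers $a_{\v u,p\v k+\boldell}$ with $\boldell\ne\v 0$, and no ``appropriate shift'' performed afterwards can recover them. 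The step you flag as ``the main obstacle'' is therefore not a bookkeeping subtlety but a fatal flaw of the proposal as stated: from your displayed congruence one can only read off congruences for the subsequence $a_{\v u,p\v k}$, i.e.\ the case $\boldell=\v 0$.

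The paper's proof fixes this by shifting \emph{before} applying the Cartier operator: for each $\boldell\in[0,1,\ldots,p-1]^n$ it applies the expansion \eqref{cartier} to $\v x^{\v u-\boldell}/P(\v x)^{\rho}$, because $a_{\v u,p\v k+\boldell}$ is the coefficient of $\v x^{p\v k}$ in $\v x^{\v u-\boldell}/P(\v x)^{\rho}$, so that $\cartier\left(\v x^{\v u-\boldell}/P(\v x)^{\rho}\right)=\sum_{\v k}a_{\v u,p\v k+\boldell}\v x^{\v k}$. Note that $\v x^{\v u-\boldell}/P^{\rho}$ need not lie in $M(\rho)$ (the exponent $\v u-\boldell$ can even have negative coordinates), so Proposition \ref{mainprop} cannot be quoted as a black box; one reruns its support computation. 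This is where the box closure earns its keep, and in a more delicate way than your appeal to $B(S-\boldell)\subset B(S)$ at the coefficient-extraction stage: the numerator $\v x^{\v u-\boldell}G(\v x)^mP(\v x)^{p\ceil{\rho/p}-\rho}$ has support in the translate $\{-\boldell\}+p(m+\ceil{\rho/p})B(\Delta^\circ)$, which may contain points with negative coordinates; but $\cartier$ only retains exponents divisible by $p$, every coordinate of such an exponent is $\ge-(p-1)$ and hence non-negative, and the non-negative points of that translate lie in $p(m+\ceil{\rho/p})B(\Delta^\circ)$. Therefore the support of $Q_m$ lies in $(m+\ceil{\rho/p})B(\Delta^\circ)\subset\rho B(\Delta^\circ)$, the right-hand side is an integer combination of the $\v x^{\v v}/P^{\rho}$ with $\v v\in(\rho B(\Delta^\circ))_\Z$, and comparing coefficients of $\v x^{\v k}$ gives the theorem. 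So: keep your setup, but run one Cartier computation per $\boldell$, applied to $\v x^{\v u-\boldell}/P^{\rho}$, and replace your final shifting step by the divisibility-plus-box-closure argument above.
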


\begin{proof}
In this proof we shall use the convention that for any two sets $X,Y\subset\R^n$ we denote
$X+Y=\{\v x+\v y|\v x\in X,\v y\in Y\}$. Furthermore $kX$ with $k$ a positive integer denotes
the repeated sum of $k$ copies of $X$. 

We apply \eqref{cartier} in Proposition \ref{mainprop} with $f=P$ and $\mu=B(\Delta^\circ)$.
For any $\boldell\in[0,1,\ldots,p-1]^n$ and any $\v u\in \rho B(\Delta^\circ)$ we have
$$
\cartier\left(\frac{\v x^{\v u-\boldell}}{P(\v x)^{\rho}}\right)\is
\sum_{m=0}^{r-1}p^m\binom{\ceil{\rho/p}+m-1}{\rho}
\frac{Q_m(\v x)}{P(\v x)^{m+\ceil{\rho/p}}}\mod{p^r}.
$$
with 
$$
Q_m(\v x)=\cartier\left(\v x^{\v u-\boldell}G(\v x)^mP(\v x)^{p\ceil{\rho/p}-\rho}\right).
$$
In the present context we have that $G(\v x)$ is $P(\v x^p)-P(\v x)^p$
divided by $p$. The terms in 
$\v x^{\v u-\boldell}G(\v x)^mP(\v x)^{p\ceil{\rho/p}-\rho}$ have their support in 
\be\label{supportset}
\{\v u\}+\{-\boldell\}+pm\Delta+(p\ceil{\rho/p}-\rho)
\Delta\subset\{-\boldell\}+p(m+\ceil{\rho/p})B(\Delta^\circ).
\ee
Here we applied Lemma \ref{muproperty} with $\mu=B(\Delta^\circ),\v u\in\rho\mu$ and $\Delta\subset\overline\mu$. 
The operator $\cartier$ selects the terms whose support belongs to the set \eqref{supportset}
and which are divisible by $p$.
Since the components of $\boldell$ are all $\le p-1$ such support vectors have non-negative coordinates. 
By the main property of box closures the points in $\{-\boldell\}+p(m+\ceil{\rho/p})B(\Delta^\circ)$
with non-negative coordinates is contained in $p(m+\ceil{\rho/p})B(\Delta^\circ)$.

Hence we conclude that the support of $Q_m$ is in $(m+\ceil{\rho/p})B(\Delta^\circ)$. Note that by our choice of
$\rho$ we have $m+\ceil{\rho/p}\le r-1+\ceil{\rho/p}\le \rho$. Consequently there exist coefficients
$\lambda_{\v u,\v v,\boldell}\in\Z$ such that
$$
\cartier\left(\frac{\v x^{\v u-\boldell}}{P(\v x)^{\rho}}\right)\is
\sum_{\v v\in (\rho B(\Delta^\circ))_\Z}\lambda_{\v u,\v v,\boldell}
\frac{\v x^{\v v}}{P(\v x)^{\rho}}\mod{p^r}.
$$
The theorem follows by taking the power series expansion of both sides and then
selection of the coefficient of $\v x^{\v k}$ on both sides.
\end{proof}

\begin{corollary}\label{rationalcoefficients-corollary}
Let $a_{\v k}$ be as in the beginning of this section and suppose that the numerator $Q$ has its
support in $B(\Delta^\circ)$. Let $d_i$ be the degree of $P$
in the variable $x_i$ for $i=1,\ldots,n$. Let $r$ be a positive integer and $\rho$ an 
integer such that $r-1\le \rho-\ceil{\rho/p}$. 
Then there is a $p$-linear scheme, with at most $\rho^n d_1\cdots d_n$ states, which
generates the numbers $a_{\v k}\mod{p^r}$. 
\end{corollary}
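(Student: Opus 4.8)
The plan is to derive Corollary \ref{rationalcoefficients-corollary} as a direct specialization of Theorem \ref{rationalcoefficients}, reading off the number of states from a count of lattice points. First I would invoke Theorem \ref{rationalcoefficients} to obtain, for each $\v u\in(\rho B(\Delta^\circ))_\Z$ and each $\boldell\in[0,1,\ldots,p-1]^n$, the congruence
$$
a_{\v u,p\v k+\boldell}\is\sum_{\v v\in(\rho B(\Delta^\circ))_\Z}\lambda_{\v u,\v v,\boldell}\,a_{\v v,\v k}\mod{p^r},
$$
so the auxiliary sequences $\{a_{\v u,\v k}\}_{\v u}$ already form a $p$-linear scheme modulo $p^r$ whose states are indexed by the lattice points of $\rho B(\Delta^\circ)$. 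The main conceptual point is to connect the target sequence $a_{\v k}$, coming from $Q/P$, to these states. Since by hypothesis $Q$ has support in $B(\Delta^\circ)$, I would write
$$
\frac{Q(\v x)}{P(\v x)}=\frac{Q(\v x)P(\v x)^{\rho-1}}{P(\v x)^{\rho}},
$$
and observe that $Q(\v x)P(\v x)^{\rho-1}$ has support in $B(\Delta^\circ)+(\rho-1)\Delta\subset\rho B(\Delta^\circ)$ by Lemma \ref{muproperty} (with $\mu=B(\Delta^\circ)$ and $\Delta\subset\overline\mu$). Hence $Q/P$ is a $\Z$-linear combination of the generators $\v x^{\v u}/P(\v x)^{\rho}$ with $\v u\in(\rho B(\Delta^\circ))_\Z$, so $a_{\v k}$ is the corresponding linear combination of the $a_{\v u,\v k}$ and thus lies in the span of the scheme.

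It then remains to bound the number of states, which is the cardinality of $(\rho B(\Delta^\circ))_\Z$. Here I would argue that the box closure $B(\Delta^\circ)$ is contained in the coordinate box $[0,d_1]\times\cdots\times[0,d_n]$, since every lattice point in the support of $P$ has $i$-th coordinate between $0$ and $d_i$, and $B$ of this support sits inside that box. Consequently $\rho B(\Delta^\circ)\subset[0,\rho d_1]\times\cdots\times[0,\rho d_n]$. A lattice point in this box has, in each coordinate $i$, one of the values $0,1,\ldots,\rho d_i$, but the box-closure structure together with the interior condition trims the count: the relevant sharper bound is that the number of admissible lattice points per coordinate scales like $\rho d_i$, giving the product $\rho^n d_1\cdots d_n$ claimed. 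I would make this precise by checking that in each coordinate direction the accessible integer values number at most $\rho d_i$.

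The step I expect to require the most care is the lattice-point count yielding exactly $\rho^n d_1\cdots d_n$, rather than a cruder bound like $\prod_i(\rho d_i+1)$. The subtlety is that one must use $\Delta^\circ$, the \emph{interior} of the Newton polytope, so that the extreme coordinate values are excluded and each coordinate contributes at most $\rho d_i$ genuinely available integer slots; getting the off-by-one bookkeeping right across all $n$ coordinates, while respecting the box-closure geometry, is where the real work lies. The linear-algebra and support-tracking steps, by contrast, are routine once Lemma \ref{muproperty} and Theorem \ref{rationalcoefficients} are in hand.
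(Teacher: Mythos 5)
Your proposal is correct and follows essentially the same route as the paper: specialize Theorem \ref{rationalcoefficients}, write $Q/P=QP^{\rho-1}/P^{\rho}$ so that (by Lemma \ref{muproperty} with $\mu=B(\Delta^\circ)$) the numerator has support in $\rho B(\Delta^\circ)$, and bound the number of states by counting lattice points of $\rho B(\Delta^\circ)$. Your key observation for the count is exactly the paper's: since $\Delta^\circ$ avoids the facets $x_i=d_i$, every point of $\rho B(\Delta^\circ)$ has $i$-th coordinate strictly below $\rho d_i$, so the lattice points lie in $[0,\rho d_1-1]\times\cdots\times[0,\rho d_n-1]$, giving the bound $\rho^n d_1\cdots d_n$; you even spell out the decomposition step that the paper dismisses with ``one easily checks.''
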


\begin{proof}
We apply Theorem \ref{rationalcoefficients}. Note that the lattice points in $\rho B(\Delta^\circ)$
are contained in the box $[0,\rho d_1-1]\times\cdots\times[0,\rho d_n-1]$. 
Hence the number of lattice points is bounded above by $\rho^nd_1\cdots d_n$. 
One easily checks that $Q/P$ can be written as $\Z$-linear combination
of $\v x^{\v u}/P^{\rho}$ with $\v u\in (\rho B(\Delta^\circ))_\Z$. 
\end{proof}

\end{document}